\documentclass[12pt]{amsart} 
\usepackage{latexsym,mathtools}    
\usepackage{mathrsfs}
\usepackage{epsfig,setspace} 
\usepackage{a4}
\usepackage{comment}
\usepackage{amssymb}
\usepackage{amsthm}
\usepackage{amsbsy} 
\usepackage{upgreek}
\usepackage{relsize}
\usepackage{tikz-cd}
\usepackage{ytableau}
\usepackage{amsfonts,amssymb,latexsym,amscd,amsmath,euscript,enumerate,verbatim,calc}
\allowdisplaybreaks
\usepackage{url}
\usepackage{hhline}
\usepackage{pifont}
\usepackage{relsize}
\usepackage[toc,page]{appendix}
\usepackage[backend=biber]{biblatex}  
\addbibresource{refs.bib}
\DeclareFieldFormat{doi}{\href{https://dx.doi.org/#1}{\textsc{doi}}}
\DeclareFieldFormat{url}{\href{#1}{\textsc{url}}}
\renewbibmacro*{doi+eprint+url}{
  \printfield{doi}
  \newunit\newblock
  \iftoggle{bbx:eprint}{
    \usebibmacro{eprint}
  }{}
  \newunit\newblock
  \iffieldundef{doi}{
    \usebibmacro{url+urldate}}
  {}
}
\ExecuteBibliographyOptions{isbn=false,giveninits=true}
\renewbibmacro{in:}{}
\AtEveryBibitem{
  \ifentrytype{book}
  {\clearfield{pages}}
}

\usepackage[colorlinks=true,linkcolor=blue,citecolor=magenta]{hyperref}  

\theoremstyle{definition}

\newtheorem*{theorem*}{Theorem}
\newtheorem{theorem}{Theorem}[section]   
\newtheorem{lemma}[theorem]{Lemma}

\newtheorem{problem}[theorem]{Problem}
 
\newtheorem{corollary}[theorem]{Corollary} 
\newtheorem{definition}[theorem]{Definition}

\def\F{{\mathbb F}}

\def\Fq{{\mathbb F}_q}
\def\MM{{\mathrm M}}

\makeatletter
\@namedef{subjclassname@2020}{\textup{2020} Mathematics Subject Classification}
\makeatother

 \title[Simple operators and $q$-Whittaker coefficients]{Simple operators and $q$-Whittaker coefficients of power sum symmetric functions}     
 
 \author{Samrith Ram} 
\address{Indraprastha Institute of Information Technology Delhi, New Delhi, India.}
\email{samrithram@gmail.com}

\subjclass[2020]{15B33, 05A15, 15A83, 05E05}  
\keywords{Simple operator, $q$-Whittaker function, power sum symmetric function, splitting subspace, finite field.}  

\begin{document}
\begin{abstract}
We give a new proof of a theorem of Bender, Coley, Robbins and Rumsey on counting subspaces with a given profile with respect to a simple operator. Counting such subspaces is equivalent to the problem of determining the $q$-Whittaker coefficients in the expansion of the power sum symmetric function. As a consequence we obtain a result of Chen and Tseng which answers a problem of Niederreiter on splitting subspaces. 
\end{abstract}

\maketitle

\section{Introduction}
Denote by $\Fq$ the finite field of cardinality $q$ and write $\MM_n(\Fq)$ for the algebra of $n\times n$ matrices over $\Fq$. Let ${\rm GL}_n(\Fq)$ denote the general linear group of $n\times n$ nonsingular matrices over $\Fq$. A partition of an integer $n$ is a weakly decreasing sequence $\lambda=(\lambda_1,\lambda_2,\ldots)$ of nonnegative integers with sum $n$. If $\lambda$ is a partition of $n$, we also write $\lambda\vdash n$. By convention, trailing zeroes are omitted when writing partitions. For instance, the partition $(5,2,1,1,0,\ldots)$ is considered equivalent to $(5,2,1,1)$. 
\begin{definition}
  Given a linear operator $T$ on $\Fq^n$, a subspace $W\subseteq \Fq^n$ is said to have $T$-profile $\mu=(\mu_1,\mu_2,\ldots)$ if
  \begin{align*}
    \dim(W+TW+\cdots+T^{j-1}W)=\mu_1+\mu_2+\cdots+\mu_j \mbox{ for }j\geq 1.
  \end{align*}
\end{definition}
Denote by $\sigma(\mu,T)$ the number of subspaces with $T$-profile $\mu$. Then $\sigma(\mu,T)$ depends only on the similarity class of $T$. Bender, Coley, Robbins and Rumsey~\cite{MR1141317} showed that the $T$-profile of each subspace is an integer partition and posed the following problem in 1992.
\begin{problem}\label{prob:BCRR}
  Determine $\sigma(\mu,T)$ given $\mu$ and $T$.
\end{problem}
By using Möbius inversion on the lattice of subspaces to derive equations satisfied by $\sigma(\mu,T)$, the following result was obtained in \cite{MR1141317}.
\begin{theorem}\label{thm:BCRR}
If $T$ is a simple linear operator on $\Fq^n$ (it has irreducible characteristic polynomial), then
\begin{align*}
  \sigma(\mu,T)=\frac{q^n-1}{q^{\mu_1}-1}\; q^{\sum_{j\geq 2}\mu_j^2-\mu_j}\prod_{i\geq 1}{\mu_i \brack \mu_{i+1}}_q.
\end{align*}
\end{theorem}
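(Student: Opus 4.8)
The plan is to first strip off the factor $(q^n-1)/(q^{\mu_1}-1)$ by a free group action, and then reduce to a clean ``pointed'' count. Since $T$ is simple, its centraliser in $\MM_n(\Fq)$ is the field $\Fq[T]\cong\F_{q^n}$, so I may identify $\Fq^n$ with $\F_{q^n}$ in such a way that $T$ becomes multiplication by a generator $\alpha$. Consider the set of pairs $(W,w)$ with $W$ of $T$-profile $\mu$ and $w\in W\setminus\{0\}$; its cardinality is $(q^{\mu_1}-1)\,\sigma(\mu,T)$. The group $\F_{q^n}^\times$ acts by $c\cdot(W,w)=(cW,cw)$, preserving profiles because multiplication by $c$ commutes with $T$. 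This action is free, since $cw=w$ with $w\neq0$ forces $c=1$ in a field, and every orbit contains a unique pair with $w=1$. Hence orbits correspond bijectively to profile-$\mu$ subspaces containing $1$, and writing $N(\mu)$ for the number of these I get $\sigma(\mu,T)=\frac{q^n-1}{q^{\mu_1}-1}\,N(\mu)$. It remains to establish
\[
N(\mu)=q^{\sum_{j\ge2}(\mu_j^2-\mu_j)}\prod_{i\ge1}{\mu_i\brack\mu_{i+1}}_q .
\]

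To analyse $N(\mu)$ I would exploit the flag $V_j=W+TW+\cdots+T^{j-1}W$. A one-line computation gives $V_j=V_{j-1}+TV_{j-1}$, so $T$ induces surjections $V_j/V_{j-1}\twoheadrightarrow V_{j+1}/V_j$; these underlie the cited fact that the profile is a partition. Conversely, a chain $V_1\subset\cdots\subset V_\ell=\Fq^n$ with $\dim V_j=\mu_1+\cdots+\mu_j$ comes from a profile-$\mu$ subspace exactly when $V_{j-1}+TV_{j-1}=V_j$ for all $j$, and an easy induction shows these consecutive one-step conditions already force the full profile. Building the chain from the top down then suggests the recursion
\[
N(\mu)=q^{\mu_2^2-\mu_2}\,{\mu_1\brack\mu_2}_q\,N(\hat\mu),\qquad \hat\mu=(\mu_2,\mu_3,\dots),
\]
which is exactly the ratio of the two product expressions and has base case $N((n))=1$; verifying it against small cases such as $\mu=(2,1)$ and $\mu=(2,2)$ is reassuring.

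The hard part, and the reason a naive induction stalls, is precisely that because $T$ is simple it has \emph{no} nontrivial invariant subspaces: there is no $T$-stable quotient of dimension $n-\mu_1$ in which to realise $\hat\mu$, so the recursion cannot literally change the ground field. Concretely, the number of admissible $V_{\ell-1}\subset V_\ell$ with $V_{\ell-1}+TV_{\ell-1}=V_\ell$ is governed by a two-part instance of the same counting problem, but relative to a subspace on which $T$ does not act, so it depends on how $V_\ell$ sits with respect to $T$ rather than on dimensions alone. This is where I would pass to symmetric functions, following the equivalence announced in the abstract: after the normalisation above, the quantities $\sigma(\mu,T)$ are the $q$-Whittaker coefficients of the power sum $p_n$, and this reformulation packages all the configuration-dependent subcounts, uniformly across field degrees, into a single symmetric-function identity. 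The explicit product for $N(\mu)$ can then be extracted using the Pieri-type creation (``simple'') operators for $q$-Whittaker functions, and I expect isolating this closed form to be the main technical step. As a byproduct, the rectangular case $\mu=(m^r)$ with $n=mr$ recovers the Chen--Tseng count of $\alpha$-splitting subspaces, since there every Gaussian binomial in the product equals $1$ and the formula collapses to $\frac{q^n-1}{q^m-1}\,q^{(r-1)(m^2-m)}$.
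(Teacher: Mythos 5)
Your opening reduction is correct and genuinely elegant: identifying $(\Fq^n,T)$ with $(\F_{q^n},\alpha)$, the action of $\F_{q^n}^\times$ on pairs $(W,w)$, $w\in W\setminus\{0\}$, is free and profile-preserving, so $\sigma(\mu,T)=\frac{q^n-1}{q^{\mu_1}-1}\,N(\mu)$, where $N(\mu)$ counts profile-$\mu$ subspaces containing $1$. But everything after that is a plan rather than a proof, and the plan fails exactly where the theorem lives. You propose the recursion $N(\mu)=q^{\mu_2^2-\mu_2}{\mu_1\brack\mu_2}_q N(\hat\mu)$, correctly diagnose that it cannot be run because a simple $T$ has no invariant subspace or quotient in which $\hat\mu$ could be realised, and then substitute an appeal to ``Pieri-type creation (`simple') operators for $q$-Whittaker functions.'' This step is (a) never carried out, and (b) essentially circular: the equivalence you invoke says precisely that the numbers $\sigma(\mu,T)$ for simple $T$ \emph{are} the $q$-Whittaker coefficients of $p_n$; those coefficients are the unknowns of the problem, so ``extract the closed form on the $q$-Whittaker side'' restates the theorem instead of proving it. (Incidentally, ``simple operators'' in the paper's title refers to operators with irreducible characteristic polynomial, not to creation operators on symmetric functions.) The entire enumerative content --- whatever finite-field count produces the factor $q^{\sum_{j\geq 2}\mu_j^2-\mu_j}\prod_{i\geq 1}{\mu_i\brack\mu_{i+1}}_q$ --- is absent from your argument.

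For comparison, the paper never passes to symmetric functions. It double counts pairs $(W,\tilde{T})$ where $\tilde{T}$ ranges over operators with the same irreducible characteristic polynomial $f$ as $T$ and the restriction of $\tilde{T}$ to the $k$-dimensional subspace $W$ (with $k=n-\mu_1$) is a simple partial map with defect dimensions $\mu$. Three inputs make this work: a duality lemma showing that $W$ has $\tilde{T}$-profile $\mu$ if and only if $\tilde{T}^*$ restricted to the annihilator $W^0$ has defect dimensions $\mu$, so that choosing $\tilde{T}$ first contributes $\frac{\gamma_q(n)}{q^n-1}\,\sigma(\mu,T)$ pairs; a known formula counting simple partial maps on a fixed $W$ with prescribed defect dimensions; and Wimmer's extension theorem, which shows that each such partial map extends to an operator on $\Fq^n$ with characteristic polynomial $f$ in exactly $\prod_{j=k+1}^{n-1}(q^n-q^j)$ ways. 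Equating the two counts and simplifying yields the formula. Your pointed-subspace reduction is compatible with this strategy and could serve as an alternative explanation of the prefactor $\frac{q^n-1}{q^{\mu_1}-1}$, but to complete a proof along your lines you would still need genuine substitutes for the defect-dimension count and the extension theorem --- precisely the machinery your sketch defers.
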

Here ${n\brack k}_q$ denotes a $q$-binomial coefficient, defined as the number of subspaces of $\Fq^n$ of dimension $k$. It has a well-known interpretation in geometry as the Poincaré polynomial of the Grassmannian variety ${\rm Gr}(k,n)$. In \cite{MR1141317}, the authors remark that Theorem \ref{thm:BCRR} does not appear to have a simple counting proof. The theorem above also has connections to algebraic combinatorics via $q$-Whittaker functions $W_\mu(x;q)$, class of single parameter symmetric functions which arise as specializations of Macdonald polynomials. It follows from \cite[Thm. 4.3]{ram2023subspace} that the problem of determining $\sigma(\mu,T)$ for simple $T$ is equivalent to that of finding the coefficients in the $q$-Whittaker expansion of the power sum symmetric function $p_n$. Indeed, one has \cite[Example 4.5]{ram2023subspace},
\begin{align*}
    p_n&=\sum_{\mu \vdash n}(-1)^{n-\mu_1}\frac{q^n-1}{q^{\mu_1}-1}q^{\sum_{j\geq 2}{\mu_{j} \choose 2}}\prod_{i\geq 1}{\mu_{i} \brack \mu_{i+1}}_q W_\mu(x;q).
\end{align*}
Subsequently, unaware of Theorem \ref{thm:BCRR}, Niederreiter \cite[p. 11]{MR1334623} stated the following problem in the context of his multiple recursive matrix method for pseudorandom number generation.

\begin{problem}\label{prob:nied}
  Let $m,d$ be positive integers. Given an element $\alpha\in \F_{q^{md}}$, such that $F_{q^{md}}=\Fq(\alpha)$, find the number of $m$-dimensional subspaces $W\subseteq \F_{q^{md}}$ such that
  \begin{equation*}\label{eq:split}
    \F_{q^{md}}=W\oplus \alpha W\oplus \alpha^2 W\oplus \cdots \oplus \alpha^{d-1}W.
  \end{equation*}
\end{problem}
A subspace $W$ satisfying the above equation is called an $\alpha$-splitting subspace. Niederreiter was actually interested in the primitive case, namely when $\alpha$ generates the multiplicative group of nonzero elements in $\F_{q^{md}}$ but it is quite natural to study the slightly more general problem above. The primitive case of Problem \ref{prob:nied} has connections with finite projective geometry and group theory via Singer cycles, elements of maximum possible order in ${\rm GL}_n(\Fq)$. The special case of counting $2$-dimensional splitting subspaces is closely linked to the problem of counting pairs of coprime polynomials over a finite field \cite{split}. This problem, and a generalization to the setting of combinatorial prefabs, has been studied by Corteel, Savage, Wilf and Zeilberger \cite{MR1620873} who prove a general pentagonal sieve theorem in the spirit of Euler. Chen and Tseng \cite{MR3093853} proved a conjecture \cite[Conj.~5.5]{MR2831705} that the number of $m$-dimensional $\alpha$-splitting subspaces in Problem~\ref{prob:nied} is given by 
\begin{align*}
\frac{q^{md}-1}{q^m-1}q^{m(m-1)(d-1)}, 
\end{align*}
thereby solving the problem of Niederreiter. In fact, another proof of Theorem~\ref{thm:BCRR} can be obtained by making a suitable choice of parameters in the general $q$-identity of \cite[Thm. 3.3]{MR3093853}. An alternate solution to Niederreiter's problem appears in \cite{MR4263652}. After these developments, it came to light \cite[p. 2]{ram2023diagonal} that Theorem~\ref{thm:BCRR} can be used to answer the question of Niederreiter. 

The main aim of this paper is to give another proof of Theorem \ref{thm:BCRR}. As noted above, this essentially gives another derivation of the $q$-Whittaker coefficients in the power sum symmetric function. The proof given here does not require the theory of symmetric functions; instead it relies on enumeration results involving partial transformations over finite fields and some ideas from \cite{MR4263652}. The answer to Niederreiter's problem is then deduced as a consequence in Corollary~\ref{cor:split}. The approach taken is very different from that in Bender, Coley, Robbins and Rumsey \cite{MR1141317} and Chen and Tseng \cite{MR3093853} where the counting problems are reduced to proving certain $q$-binomial identities. Finally, we remark that specific instances of Problem~\ref{prob:BCRR} have been discussed in many papers \cite{split,MR4349887, MR4555237,MR4682040,MR4797454, ram2023diagonal} with the general solution appearing in \cite{ram2023subspace}.
\section{Subspace profiles and partial maps}
We require some results on partial linear maps which we will relate to subspace profiles later on. Given a subspace $W$ of a finite-dimensional vector space $V$ and a linear map $T:W\to V$, define a sequence of subspaces (see Gohberg, Kaashoek and van Schagen \cite[Sec. III.1]{Gohbergetal1995}) $W_i(i\geq 0)$ by $W_0=V,W_1=W$ and for $i\geq 1$,
\begin{align*}
  W_{i+1}:=W_i\cap T^{-1}W_i=W\cap T^{-1}W\cap \cdots \cap T^{-i}W.
\end{align*}
Here $T^{-1}$ denotes the inverse image under $T$. The descending sequence $W_0\supseteq W_1\supseteq \cdots$ necessarily stabilizes since the dimensions of the subspaces $W_i$ are nonnegative integers. Write $d_i=\dim W_i$ for each $i\geq 0$ and define
\begin{align*}
  \ell=\ell(T):=\min \{i:W_i=W_{i+1}\}.
\end{align*}
Thus $W_\ell$ is the maximal $T$-invariant subspace. The integers $\lambda_i=\lambda_i(T):=d_{i-1}-d_i$ for $1\leq i\leq \ell$ are called the defect dimensions of $T$ \cite[p. 52]{Gohbergetal1995}. It is not difficult to see that $\lambda(T)$ is an integer partition of $n-d_\ell$ with first part $\dim V-\dim W$ \cite[Lem. 2.2]{MR4264825}.
\begin{definition}
Given a subspace $W$ of $V$, a linear map $T:W\to V$ is said to be \emph{simple} if for each $T$-invariant subspace $U$ we have $U=\{0\}$ or $U=V$. 
\end{definition}
Note that a linear operator $T$ on a finite vector space $V$ is simple precisely when $T$ has an irreducible characteristic polynomial. On the other hand, for a simple map $T$ defined on a proper subspace $W\subset V$, we have $\lambda(T)$ is a partition of $n$ since the only $T$-invariant subspace is the zero subspace.
\begin{theorem}\label{thm:simplewithdefect}\cite[Cor. 3.6]{MR4264825}
  If $W\subseteq \Fq^n$ is a proper subspace of dimension $k$ and $\mu$ is a partition of $n$ with $\mu_1=n-k$, then the number of simple maps defined on $W$ with defect dimensions $\mu$ equals
  \begin{align*}
        q^{\sum_{j\geq 2}\mu_j^2}\;\gamma_q(k)\;\prod_{i\geq 1}{\mu_i\brack \mu_{i+1}}_q,
  \end{align*}
 where $\gamma_q(k):=|{\rm GL}_k(\Fq)|=\prod_{i=0}^{k-1}(q^k-q^i)$.
\end{theorem}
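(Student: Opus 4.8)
The plan is to induct on the number of parts of $\mu$ by peeling off the largest part $\mu_1$ and restricting $T$ to a smaller subspace. Write $N(\mu)$ for the number of simple maps $T\colon W\to V$ with defect dimensions $\mu$, where $V=\Fq^n$ and $W$ is a fixed subspace of dimension $k=n-\mu_1$; since ${\rm GL}(V)$ acts transitively on subspaces of a given dimension and preserves defect dimensions, this count depends only on $\mu$. The base case is $\mu=(n)$, where $k=0$, $W=\{0\}$, and the unique (empty) map is simple with defect $(n)$, so $N((n))=1$, matching the claimed formula.

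For the inductive step, I would first establish the key structural fact: if $T$ is simple on $W$ with defect $\mu=(\mu_1,\mu_2,\ldots)$, then $W_2=W\cap T^{-1}W$ has dimension $d_2=k-\mu_2$, and the restriction $T_2:=T|_{W_2}\colon W_2\to W$ is again a simple map, now with ambient space $W_1=W$, whose defect dimensions are $\mu'=(\mu_2,\mu_3,\ldots)$. Here I would check directly that the defect chain of $T_2$ computed inside $W_1$ satisfies $W_i(T_2)=W_{i+1}(T)$ for all $i$, so that the defect partition simply shifts; simplicity of $T_2$ follows because any $T_2$-invariant subspace $U$ satisfies $U\subseteq T^{-1}W\cap W=W_2\subsetneq W_1$ and is $T$-invariant, hence $U=\{0\}$. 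Conversely, given a subspace $W_2\subseteq W$ of dimension $d_2$, a simple map $T_2\colon W_2\to W$ with defect $\mu'$, and a linear extension of $T_2$ to all of $W$ whose induced map $W/W_2\to V/W$ is injective, the resulting $T$ is simple on $W$ with defect $\mu$ (the injectivity condition forces $W\cap T^{-1}W=W_2$ exactly, and simplicity is inherited from $T_2$ by the same argument). This gives a bijection between simple maps $T$ with defect $\mu$ and triples consisting of a subspace $W_2$, a simple map $T_2$, and a valid extension.

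With the bijection in hand, the counting is a product of three factors. The subspace $W_2$ ranges over the ${k\brack\mu_2}_q$ subspaces of $W$ of dimension $d_2=k-\mu_2$; for each, there are $N(\mu')$ choices of $T_2$ (independent of $W_2$ by transitivity); and for each $(W_2,T_2)$ the number of valid extensions is obtained by choosing the $T$-images of a basis of a complement of $W_2$ in $W$ subject to their images in $V/W$ being linearly independent, which gives $q^{k\mu_2}\prod_{i=0}^{\mu_2-1}(q^{\mu_1}-q^i)$. This yields the recursion $N(\mu)={k\brack\mu_2}_q\,q^{k\mu_2}\Bigl(\prod_{i=0}^{\mu_2-1}(q^{\mu_1}-q^i)\Bigr)N(\mu')$. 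Finally I would verify, by induction, that the closed formula in the statement satisfies this same recursion; using $\prod_{i=0}^{\mu_2-1}(q^{\mu_1}-q^i)={\mu_1\brack\mu_2}_q\,\gamma_q(\mu_2)$ together with $\gamma_q(k)=q^{\binom{k}{2}}\prod_{j=1}^k(q^j-1)$, the required identity collapses to the elementary exponent identity $\binom{\mu_2+1}{2}+\binom{\mu_2}{2}=\mu_2^2$.

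I expect the main obstacle to be the structural step: proving carefully that the restriction $T_2$ has exactly the shifted defect partition and is simple, and that the extension data is counted with neither over- nor undercounting. Once the correspondence is pinned down, the remaining ingredients—the extension count and the $q$-binomial bookkeeping—are routine.
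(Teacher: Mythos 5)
Your proof is correct, but there is nothing in the paper to compare it against: the paper does not prove Theorem \ref{thm:simplewithdefect} at all, it imports it by citation from \cite[Cor. 3.6]{MR4264825}. Your argument is therefore a genuinely independent, self-contained derivation, and its key steps check out. The restriction $T_2=T|_{W_2}$ to $W_2=W\cap T^{-1}W$ does land in $W$; its defect chain relative to the ambient space $W$ satisfies $W_i(T_2)=W_{i+1}(T)$, so its defect partition is the shift $\mu'=(\mu_2,\mu_3,\ldots)$; simplicity passes to $T_2$ because a $T_2$-invariant subspace is $T$-invariant and lies in $W_2\subsetneq V$, and conversely any $T$-invariant $U\subseteq W$ satisfies $U\subseteq W\cap T^{-1}W=W_2\subsetneq W$, so simplicity of $T_2$ (relative to $W$) forces $U=\{0\}$; and injectivity of the induced map $W/W_2\to V/W$ is exactly equivalent to $W\cap T^{-1}W=W_2$, which pins down your bijection. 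The extension count $q^{k\mu_2}\prod_{i=0}^{\mu_2-1}(q^{\mu_1}-q^i)$ is right, and the closed formula does satisfy the resulting recursion
\begin{align*}
N(\mu)={k\brack \mu_2}_q\;q^{k\mu_2}\prod_{i=0}^{\mu_2-1}(q^{\mu_1}-q^i)\;N(\mu'),
\end{align*}
since after substituting $\prod_{i=0}^{\mu_2-1}(q^{\mu_1}-q^i)={\mu_1\brack\mu_2}_q\gamma_q(\mu_2)$ it reduces to $q^{\mu_2^2}\gamma_q(k)={k\brack\mu_2}_q\, q^{k\mu_2}\gamma_q(\mu_2)\gamma_q(k-\mu_2)$, which is immediate from the orbit--stabilizer identity $\gamma_q(k)=\gamma_q(\mu_2)\gamma_q(k-\mu_2)q^{\mu_2(k-\mu_2)}{k\brack\mu_2}_q$ that the paper itself invokes in the proof of Theorem \ref{thm:main} (equivalently, your exponent identity $\binom{\mu_2}{2}+\binom{\mu_2+1}{2}=\mu_2^2$). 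Two points should be made explicit in a final write-up: the inductive step needs $\mu_2\geq 1$, so that $W_2$ is proper in $W$ (if $\mu_2=0$ you are in the base case $k=0$), and simplicity of $T_2$ must throughout be taken relative to the ambient space $W$, not $\Fq^n$, as you indeed do. What your route buys is a paper that is self-contained at this point, using only elementary linear algebra plus the same $q$-identities already present; what the author's citation buys is brevity.
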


Given a linear operator $T$ on $V$, let $T^*$ denote the transpose of $T$ defined on the linear dual $V^*$. For each subspace $W\subseteq V$, let $W^0$ denote its annihilator (the space of all linear functionals which vanish on $W$) in $V^*.$ The following lemma demonstrates the duality between profiles and defect dimensions. 
\begin{lemma}\label{lem:duality}
If $T$ is a linear operator on $V$, then a subspace $W$ of $V$ has $T$-profile $\mu$ if and only if the restriction of $T^*$ to $W^0$ has defect dimensions $\mu$.
\end{lemma}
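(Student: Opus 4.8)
The plan is to dualize. I would translate the profile filtration of $W$ into a filtration in $V^*$ by taking annihilators, and then show that the resulting subspaces are exactly the ones produced by the defect construction applied to $T^*|_{W^0}$. Matching dimensions then forces the profile parts and the defect dimensions to agree term by term, which yields the equivalence.

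First I would record the annihilator dictionary, writing $n=\dim V$: for subspaces $A,B\subseteq V$ one has $\dim A+\dim A^0=n$ and $(A+B)^0=A^0\cap B^0$, while for a linear operator $T$ on $V$ the adjointness $(T^*\varphi)(a)=\varphi(Ta)$ gives $(TA)^0=(T^*)^{-1}(A^0)$; iterating the last identity yields $(T^iA)^0=(T^*)^{-i}(A^0)$ for every $i\ge 0$. Setting $P_j:=W+TW+\cdots+T^{j-1}W$, so that $W$ has $T$-profile $\mu$ precisely when $\dim P_j=\mu_1+\cdots+\mu_j$ for all $j\ge 1$, these identities give
\[
  P_j^0=\bigcap_{i=0}^{j-1}(T^iW)^0=\bigcap_{i=0}^{j-1}(T^*)^{-i}(W^0).
\]

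The central step is to identify $P_j^0$ with the $j$-th subspace $U_j$ of the defect construction for the partial map $S:=T^*|_{W^0}\colon W^0\to V^*$. By the explicit description of the defect subspaces given at the beginning of this section, applied to $S$, one has $U_j=W^0\cap S^{-1}W^0\cap\cdots\cap S^{-(j-1)}W^0$. Now a functional $\varphi$ lies in $\bigcap_{i=0}^{j-1}(T^*)^{-i}(W^0)$ iff $\varphi,T^*\varphi,\ldots,(T^*)^{j-1}\varphi$ all lie in $W^0$; but whenever $\varphi,\dots,(T^*)^{i-1}\varphi\in W^0$ the partial iterate $S^i\varphi$ is defined and equals $(T^*)^i\varphi$, so this is exactly the condition defining $U_j$. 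Hence $U_j=P_j^0$, and therefore $\dim U_j=n-\dim P_j$ for all $j$, including $j=0$, where $P_0=\{0\}$ and $U_0=V^*$.

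The conclusion follows by differencing dimensions. The defect dimensions satisfy $\lambda_j=\dim U_{j-1}-\dim U_j=(n-\dim P_{j-1})-(n-\dim P_j)=\dim P_j-\dim P_{j-1}$, which is precisely the $j$-th increment of the profile filtration; moreover the defect sequence stabilizes at the first $\ell$ with $\dim U_\ell=\dim U_{\ell+1}$, that is, exactly where the profile increments first vanish, so the two finite sequences of parts coincide including trailing zeros. Thus for \emph{any} subspace $W$ the defect dimensions of $T^*|_{W^0}$ equal the increments of the profile filtration of $W$, and the asserted equivalence is immediate. I expect the only delicate point to be the identification $U_j=P_j^0$: one must check that iterating the partial map $S$ on its shrinking domain produces the same intersection as iterating the honest transpose $T^*$ and intersecting with $W^0$. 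The remaining manipulations are routine applications of the annihilator dictionary.
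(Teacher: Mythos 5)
Your proof is correct and follows essentially the same route as the paper's: dualize the profile filtration via annihilators, using $(A+B)^0=A^0\cap B^0$ and $(TA)^0=(T^*)^{-1}(A^0)$, then match dimension increments. The only difference is that you explicitly verify the point the paper leaves implicit — that the defect subspaces of the \emph{partial} map $T^*|_{W^0}$ coincide with the intersections $W^0\cap (T^*)^{-1}W^0\cap\cdots$ formed from the full transpose — which is a welcome bit of extra care.
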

\begin{proof}
Suppose $W$ has $T$-profile $\mu$ and let $\tilde{\mu}$ denote the defect dimensions of the restriction of $T^*$ to $W^0$. By definition, for each $j\geq 1$,
  \begin{align*}
    \mu_j=\dim(W+TW+\cdots+T^{j-1}W)-\dim(W+TW+\cdots+T^{j-2}W).
  \end{align*}
  On the other hand,
  \begin{align*}
    \tilde{\mu}_j&=\dim(W^0\cap T^{*-1}W^0\cap\cdots\cap T^{*-(j-2)}W^0)\\
    &\qquad\qquad-\dim(W^0\cap T^{*-1}W^0\cap\cdots\cap T^{*-(j-1)}W^0),
  \end{align*}
   where $T^{*-1}$ denotes the inverse image under $T^*$. Here an empty sum is interpreted as the zero subspace while an empty intersection is all of $V$. The annihilator of the subspace $TU$ is given by $T^{*-1}U^0$. Therefore, the annihilator of the subspace $U+TU+\cdots+T^jU$ is the subspace $U^0\cap T^{*-1}U^0\cap \cdots \cap T^{*-j}U^0$. The identity $\dim U_1-\dim U_2=\dim U_2^0-\dim U_1^0$ implies that $\mu_j=\tilde{\mu}_j$ for each $j\geq 1$.
 \end{proof}
As every operator is similar to its transpose, we obtain the following corollary.
\begin{corollary}\label{cor:duality}
  For each operator $T$ on $\Fq^n$, the number of subspaces $W\subseteq \Fq^n$ such that the restriction of $T$ to $W$ has defect dimensions $\mu$ is given by $\sigma(\mu,T)$. 
\end{corollary}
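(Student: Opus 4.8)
The plan is to combine Lemma~\ref{lem:duality} with two bijections on the lattice of subspaces: the annihilator correspondence $W\mapsto W^0$, and the correspondence induced by an isomorphism realizing the similarity between $T$ and $T^*$. Throughout, write $V=\Fq^n$.

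First I would record what Lemma~\ref{lem:duality} already supplies. It states that $W$ has $T$-profile $\mu$ precisely when $T^*$ restricted to $W^0$ has defect dimensions $\mu$. Since the annihilator map $W\mapsto W^0$ is an (inclusion-reversing) bijection from the subspaces of $V$ onto the subspaces of $V^*$, summing over all $W$ immediately gives
\[
\sigma(\mu,T)=\#\{U\subseteq V^*: T^*|_U \text{ has defect dimensions } \mu\}.
\]
In other words, counting $T$-profiles on $V$ is the same as counting restrictions of $T^*$ on $V^*$ with prescribed defect dimensions.

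Next I would transport this count from $T^*$ back to $T$ using the classical fact that every operator is similar to its transpose. Concretely, I would fix an isomorphism $\phi:V\to V^*$ with $\phi T=T^*\phi$ and show that the induced map $W\mapsto \phi(W)$ on subspaces preserves the defect dimensions of restrictions. The key verification is that $\phi$ carries the canonical descending sequence $W_i$ associated with $T|_W$ to the sequence associated with $T^*|_{\phi(W)}$: because $\phi$ is bijective it commutes with intersections, and the intertwining relation yields $\phi(T^{-1}S)=(T^*)^{-1}\phi(S)$ for every subspace $S$, so a short induction gives $\phi(W_i)=(\phi(W))_i$ for all $i$. Equal dimensions at each stage then force equal defect dimensions, so $W\mapsto\phi(W)$ sends each $W$ with $T|_W$ of defect dimensions $\mu$ to a subspace $U\subseteq V^*$ with $T^*|_U$ of defect dimensions $\mu$, bijectively.

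Combining the two bijections completes the argument: both $W\mapsto W^0$ and $W\mapsto\phi(W)$ map the subspaces of $V$ bijectively onto the subspaces of $V^*$, the former identifying $\{W : T\text{-profile } \mu\}$ with $\{U\subseteq V^*: T^*|_U \text{ defect dimensions } \mu\}$, and the latter identifying $\{W : T|_W \text{ defect dimensions } \mu\}$ with the same target set. Hence $\sigma(\mu,T)$ equals the number of $W\subseteq V$ for which $T|_W$ has defect dimensions $\mu$. The only step requiring genuine care is the invariance of defect dimensions under the intertwining isomorphism; the remainder is a formal manipulation of bijections, and the similarity of $T$ with $T^*$ is standard.
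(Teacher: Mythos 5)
Your proposal is correct and takes essentially the same route as the paper: the paper deduces the corollary in one line from Lemma~\ref{lem:duality} together with the fact that every operator is similar to its transpose, which is exactly your argument. Your additional verification that an intertwining isomorphism $\phi$ with $\phi T = T^*\phi$ preserves the defect sequence (via $\phi(T^{-1}S)=(T^*)^{-1}\phi(S)$ and induction) is the detail the paper leaves implicit, and it is carried out correctly.
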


\begin{lemma}\label{lem:extension}\cite[Lem. 2.10]{MR4263652}
  Let $W\subset W'$ be subspaces of $\Fq^n$ with dimensions $k,k+1$ respectively where $0\leq k<n-1$. Given a simple map $T:W\to \Fq^n$, the number of extensions of $T$ to a simple map defined on $W'$ is given by $q^n-q^{k+1}$.
\end{lemma}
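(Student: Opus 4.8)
The plan is to parametrize the extensions concretely and count the non-simple ones. Fix a vector $w\in W'\setminus W$, so that $W'=W\oplus\Fq w$; every extension $T'$ of $T$ to $W'$ is determined by the single value $v:=T'(w)\in\Fq^n$, giving $q^n$ extensions in total. Since $W'$ is a proper subspace of $\Fq^n$, no invariant subspace can equal $\Fq^n$, so $T'$ fails to be simple precisely when there is a nonzero $T'$-invariant subspace $U\subseteq W'$. I would first record the basic structural constraint: any such $U$ must satisfy $U\not\subseteq W$, for otherwise $U$ would be a nonzero $T$-invariant subspace contained in $W$, contradicting the simplicity of $T$. Consequently $U+W=W'$, the intersection $U\cap W$ has codimension one in $U$, and $T'$ restricts to a genuine linear operator on $U$ for which $U\cap W$ is a hyperplane containing no nonzero $T'|_U$-invariant subspace. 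Thus counting simple extensions amounts to showing that the set $B$ of ``bad'' values $v$ for which such a $U$ exists has size exactly $q^{k+1}$.

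The heart of the argument would be a local count. Choosing a complement $C'$ with $\Fq^n=W\oplus\Fq w\oplus C'$ and writing $T'$ in the corresponding block form, one sees that simplicity is equivalent to an observability condition for the pair formed by the $W'$-component and the $C'$-component of $T'$, and that the three blocks of $v$ (its components in $W$, in $\Fq w$, and in $C'$) are the free parameters. For a fixed admissible candidate $U$ (meaning $U\cap W$ is annihilated by the output block and $T'(U\cap W)\subseteq U$, both conditions being independent of $v$), I would solve the requirement $T'U\subseteq U$ directly: it pins the $C'$-component of $v$, leaves the $\Fq w$-component free, and constrains the $W$-component of $v$ to a single coset of $U\cap W$. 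This shows that for each admissible $U$ the set of $v$ realizing $U$ as an invariant subspace is an affine subspace of size $q^{\dim U}$.

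Finally I would assemble these local counts into the global identity $|B|=q^{k+1}$. The cleanest route is to stratify $B$ by the maximal $T'$-invariant subspace contained in $W'$, namely the maximal ``trapped'' subspace $\bigcap_{i\ge 0}\{x\in W':x,T'x,\dots,T'^{i-1}x\in W'\}$, which is uniquely determined by $v$ and is always admissible; a M\"obius inversion over the poset of admissible invariant subspaces, fed by the uniform value $q^{\dim U}$ from the previous step, should then collapse the alternating sum to $q^{k+1}$. An alternative is an induction driven by the conductor structure $(U,\,T'|_U,\,U\cap W)$, which is itself a simple partial map on a hyperplane extended to a full operator, and hence a strictly smaller instance of the same problem.

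I expect this last step to be the main obstacle. The difficulty is that invariant subspaces occur in every dimension, including irreducible ones carrying no rational eigenvalue---the extreme case being when all of $W'$ is invariant---so $B$ is genuinely not a disjoint union of affine pieces, and the overlaps among the various $q^{\dim U}$ affine sets must be controlled exactly. Making the M\"obius cancellation (or the inductive reduction) precise, and in particular verifying that the final count is independent of the fine structure of $T$, is where the real work lies; the reductions in the first two paragraphs are routine by comparison.
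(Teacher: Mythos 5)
The paper itself does not prove this lemma: it is imported verbatim from \cite[Lem.~2.10]{MR4263652}, so the only issue is whether your argument stands on its own, and it does not. Your first two reductions are correct: extensions of $T$ to $W'$ are parametrized by $v=T'(w)$; any nonzero $T'_v$-invariant $U\subseteq W'$ satisfies $U\not\subseteq W$ by simplicity of $T$; and for a fixed admissible $U$ the set $C_U$ of values $v$ making $U$ invariant is a coset of $U$, of size $q^{\dim U}$. The gap is the assembly step, and it is not merely unfinished --- the mechanism you propose provably fails. M\"obius inversion relating $g(U)=|C_U|$ to $f(U)=\#\{v: U_{\max}(v)=U\}$, where $U_{\max}(v)$ denotes the maximal $T'_v$-invariant subspace of $W'$, requires the incidence identity $g(U)=\sum_{U'\supseteq U}f(U')$, which in turn requires that $U\subseteq U_{\max}(v)$ imply that $U$ itself is invariant for $T'_v$. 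That implication is false: invariance does not pass to subspaces of an invariant subspace.

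Concretely, take $n\ge 3$, $W=\langle e_1\rangle$, $w=e_2$, $W'=\langle e_1,e_2\rangle$, and $Te_1=e_2$ (this $T$ is simple, and $k=1$). The bad set is exactly $W'$: if $v\in W'$ then $W'$ itself is invariant, while if $v\notin W'$ no nonzero invariant subspace exists; so the lemma's count $q^{k+1}=q^2$ holds, with $U_{\max}(v)=W'$ for \emph{every} bad $v$. Yet each admissible line $U_\alpha=\langle \alpha e_1+e_2\rangle$ is invariant for only the $q$ values $v\in-\alpha e_2+U_\alpha$, so $g(U_\alpha)=q$ whereas $\sum_{U'\supseteq U_\alpha}f(U')=f(U_\alpha)+f(W')=0+q^2$. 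Running the inversion formula with your ``uniform values'' $q^{\dim U}$ yields $q^n-q\cdot q+q\cdot q^2=q^n-q^2+q^3$ simple extensions instead of the correct $q^n-q^2$. The underlying obstruction is that the overlaps among the cosets $C_U$ are not governed by the poset of admissible subspaces at all, but by the fine structure of $T'_v$ restricted to $U_{\max}(v)$ (split, irreducible, or non-semisimple), which is precisely the $T$-dependence you flagged; your fallback induction meets the same wall, since the quotient map $W'/U\to\Fq^n/U$ induced by an invariant $U$ need not be simple, so the strata are not determined by dimensions alone. In short, the heart of the lemma is still missing. The actual proof in \cite{MR4263652} proceeds through the matrix-polynomial (unimodularity) framework of \cite[Prop.~2.3]{MR4263652} rather than by inclusion--exclusion over invariant subspaces.
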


One can characterize simplicity via matrices. Given a $k$-dimensional subspace $W\subseteq \Fq^n$ let $\mathcal{B}$ be a basis for $W$ and suppose $\mathcal{B}'\supseteq \mathcal{B}$ is a basis for $\Fq^n$. Given a linear map $T:W\to \Fq^n$ let $A$ denote the $n\times k$ matrix of $T$ with respect to the bases $\mathcal{B}$ and $\mathcal{B}'$. The \emph{invariant factors} of $T$ are the diagonal entries in the Smith normal form \cite[p. 257]{MR0276251} of $xI-A$, where $I$ denotes the $n\times k$ matrix whose $(i,j)$th entry is 1 if $i=j$ and 0 otherwise. In fact, $T$ is simple precisely when the matrix polynomial $xI-A$ is unimodular (its maximal minors are coprime, equivalently its invariant factors are all equal to 1) \cite[Prop. 2.3]{MR4263652}. We require the following result of Wimmer \cite{Wimmer1974} (also see Cravo \cite[Thm. 15]{Cravo2009}). 
\begin{theorem}\label{thm:wimmer}
  Let $F$ be a field and suppose $W\subset F^n$ is a $k$-dimensional subspace. Let $T:W\to F^n$ be a linear map and suppose $f_1\mid f_2\mid\cdots\mid f_k$ are the invariant factors of $T$. Then $T$ can be extended to a linear operator on $F^n$ with characteristic polynomial $f$ if and only if $\prod_{i=1}^k f_i$ divides $f$.
\end{theorem}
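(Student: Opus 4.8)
The plan is to prove the two implications separately, working throughout with the $n\times k$ polynomial matrix $xI-A$. Recall that if $d_j$ denotes the $j$-th determinantal divisor, the gcd of all $j\times j$ minors, then the invariant factors satisfy $f_j=d_j/d_{j-1}$, so that
\[
  \prod_{i=1}^k f_i = d_k = \gcd\{\,k\times k \text{ minors of } xI-A\,\}.
\]
Since the top $k\times k$ block of $I$ is the identity, $\det(xI_k-A_1)$ (with $A_1$ the top block of $A$) is a monic maximal minor, so $d_k$ is monic of degree at most $k$; as any extension has characteristic polynomial of degree $n$, the asserted divisibility $\prod_i f_i\mid f$ is at least consistent on degrees.

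For necessity, suppose $\hat T$ extends $T$ and has matrix $\hat A$ in the basis $\mathcal B'$. Since $\mathcal B\subseteq\mathcal B'$ and $\hat T|_W=T$, the first $k$ columns of $xI_n-\hat A$ are exactly the columns of $xI-A$. I would expand $f=\det(xI_n-\hat A)$ by the generalized Laplace expansion along these first $k$ columns,
\[
  f=\sum_{S\in\binom{[n]}{k}}\pm\,\det\big((xI-A)_{S}\big)\,\det\big(\hat A'_{\bar S}\big),
\]
where $(xI-A)_S$ is the $k\times k$ submatrix on row set $S$ and $\hat A'_{\bar S}$ is the complementary minor built from the last $n-k$ columns. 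Each $\det((xI-A)_S)$ is a maximal minor of $xI-A$, hence divisible by $d_k=\prod_i f_i$; so every summand, and therefore $f$, is divisible by $\prod_i f_i$. This direction is short and uses nothing beyond the definition of the invariant factors.

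The substantive direction is sufficiency, which I would establish by induction on the codimension $n-k$. The base case $k=n$ is immediate: then $T$ is an operator, $\prod_i f_i$ is its characteristic polynomial, and $\prod_i f_i\mid f$ with both monic of degree $n$ forces $f=\prod_i f_i$. For the inductive step I would first record two facts about a one-dimensional extension $T'\colon W'\to F^n$, $\dim W'=k+1$: expanding any $(k+1)\times(k+1)$ minor of $xI'-A'$ along its last column shows $d_k(xI-A)\mid d_{k+1}(xI'-A')$, so the product $d=\prod_{i=1}^k f_i$ divides the corresponding product $d'$ for $T'$; and the top $(k+1)\times(k+1)$ minor is monic of degree $k+1$, so $\deg d'\le k+1$. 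The reduction is then a one-step realizability statement: given $f$ with $d\mid f$, there is a one-dimensional extension $T'$ whose invariant-factor product $d'$ still divides $f$, after which the inductive hypothesis applied to $T'$ (of smaller codimension) produces an extension to $F^n$ with characteristic polynomial $f$. Concretely one fixes a chain of monic divisors $d=e_k\mid e_{k+1}\mid\cdots\mid e_n=f$ with $\deg e_j\le j$, which always exists since one may keep $e_j=d$ and defer the factor $f/d$ to late steps, and realizes $d'=e_{k+1}$ by a suitable choice of the appended column.

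The main obstacle is exactly this realizability. Appending one column adds the single degree-one relation $x\,e_{k+1}-v$ to the presentation
\[
  \operatorname{coker}\big(xI-A\colon F[x]^k\to F[x]^n\big)\;\cong\;\bigoplus_{i=1}^k F[x]/(f_i)\;\oplus\;F[x]^{\,n-k},
\]
and yet one must be able to raise the torsion of this module by a factor of prescribed, possibly large, degree. The mechanism is that the image of the new basis vector in the cokernel is a nonconstant multiple of a cyclic generator of the free part, so the action by $x$ encoded in the appended column turns the linear relation into a high-degree relation on that generator; this is precisely the single-input pole-assignment phenomenon of linear control theory. Making it precise, equivalently controlling how the Smith normal form of the enlarged pencil depends on $v$, is the heart of the theorem and is where I would expect essentially all of the effort to go; the control-theoretic argument of Wimmer and the matrix-completion argument of Cravo each supply a way to carry it out.
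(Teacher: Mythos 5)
The paper itself does not prove this statement: it quotes the result from Wimmer \cite{Wimmer1974} (see also Cravo \cite[Thm. 15]{Cravo2009}), so your attempt has to be judged on its own terms rather than against a proof in the text. Your necessity argument is complete and correct: since $\hat T$ extends $T$, the first $k$ columns of $xI_n-\hat A$ are exactly the pencil $xI-A$, and the generalized Laplace expansion of $f=\det(xI_n-\hat A)$ along those columns writes $f$ as a combination of the $k\times k$ minors of $xI-A$, each of which is divisible by the determinantal divisor $d_k=\prod_{i=1}^k f_i$. That half is short, standard, and sound.

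The sufficiency direction, however, has a genuine gap, and you essentially concede it: the ``one-step realizability'' lemma is not proved but deferred to Wimmer's control-theoretic argument and Cravo's completion argument --- the very sources the proof was meant to replace. Moreover, the induction does not actually reduce the difficulty. Applied at codimension one, your one-step lemma says: any $T'\colon W'\to F^n$ with $\dim W'=n-1$ and invariant-factor product $d'$ dividing $f$ extends to an operator with characteristic polynomial exactly $f$ (since at that stage $d'\mid f$ with both monic of degree $n$ forces equality); that is the full theorem for $k=n-1$, not a smaller statement. And in the only case the present paper needs --- $f$ irreducible, as in Lemma \ref{lem:numextensions} --- your chain $e_k\mid e_{k+1}\mid\cdots\mid e_n=f$ is forced to be $e_j=1$ for all $j<n$, because $f$ has no nontrivial proper divisors; the entire degree-$n$ jump is concentrated in the final step, so the induction is vacuous and all content sits in the unproven lemma. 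The module-theoretic mechanism you sketch --- that the class of the new basis vector in $\operatorname{coker}(xI-A)$ is a nonconstant multiple of a cyclic generator of the free part --- is asserted, not established (it is not even clear this class lies in, or generates, a free summand in general), and controlling how the Smith normal form of the augmented pencil depends on the appended column is precisely the pole-assignment argument that constitutes Wimmer's proof. As it stands, you have proved the easy implication and reduced the hard one to itself.
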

\begin{lemma}\label{lem:numextensions} 
Let $f\in \Fq[x]$ be a monic irreducible polynomial of degree $n$ and suppose $W\subseteq \Fq^n$ is a $k$-dimensional subspace. If $T:W\to \Fq^n$ is simple, then the number of extensions of $T$ to all of $\Fq^n$ with characteristic polynomial $f$ is given by $\prod_{j=k+1}^{n-1}(q^n-q^j)$.
\end{lemma}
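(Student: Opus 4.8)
The plan is to argue by downward induction on $k=\dim W$, extending $T$ one dimension at a time and using Lemma~\ref{lem:extension} to control the number of intermediate simple maps. Write $E(k)$ for the number of extensions of a simple map on a $k$-dimensional subspace of $\Fq^n$ to an operator on $\Fq^n$ with characteristic polynomial $f$; I would show that $E(k)$ depends only on $k$ (not on the particular $W$ or $T$) and equals $\prod_{j=k+1}^{n-1}(q^n-q^j)$. The base case is $k=n-1$, where the product is empty and the assertion is that there is exactly one such extension.

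For the base case, fix a vector $e$ with $\Fq^n=W\oplus\langle e\rangle$ and, for $v\in\Fq^n$, let $S_v$ be the operator extending $T$ with $S_v(e)=v$. Every operator extension of $T$ arises as a unique $S_v$, so the extensions are parametrized bijectively by $v\in\Fq^n$. Let $\Phi(v)$ be the characteristic polynomial of $S_v$; then $\Phi$ maps $\Fq^n$ into the set of monic polynomials of degree $n$ over $\Fq$, and both sets have cardinality $q^n$. Since $T$ is simple, $xI-A$ is unimodular and all invariant factors of $T$ equal $1$; hence by Theorem~\ref{thm:wimmer}, $T$ extends to an operator with characteristic polynomial $g$ for \emph{every} monic $g$ of degree $n$. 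This says precisely that $\Phi$ is surjective, and a surjection between finite sets of equal cardinality is a bijection, so $\Phi^{-1}(f)$ is a singleton and $E(n-1)=1$.

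For the inductive step, fix $k<n-1$ and a simple $T$ on a $k$-dimensional $W$, and double count the pairs $(T',S)$ in which $T'\colon W'\to\Fq^n$ is a simple extension of $T$ to some $(k+1)$-dimensional subspace $W'$ with $W\subset W'\subsetneq\Fq^n$, and $S$ is an operator extension of $T'$ with characteristic polynomial $f$. Counting with $T'$ first, the subspaces $W'$ correspond to lines in $\Fq^n/W$, of which there are $\tfrac{q^{n-k}-1}{q-1}$; each carries $q^n-q^{k+1}$ simple extensions by Lemma~\ref{lem:extension}, and each resulting $T'$ admits $E(k+1)$ operator extensions with characteristic polynomial $f$ by the induction hypothesis. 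Counting with $S$ first, each of the $E(k)$ operator extensions $S$ of $T$ with characteristic polynomial $f$ is a simple operator (as $f$ is irreducible), so its restriction $S|_{W'}$ to any such proper $W'$ is a simple map extending $T$, giving $\tfrac{q^{n-k}-1}{q-1}$ pairs. Equating the two counts and cancelling the common factor $\tfrac{q^{n-k}-1}{q-1}$ yields $E(k)=(q^n-q^{k+1})E(k+1)$, and unwinding the recursion from $E(n-1)=1$ produces the claimed product.

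The main obstacle is the base case: from Wimmer's theorem one knows a priori only that \emph{at least one} extension with characteristic polynomial $f$ exists, whereas the content is uniqueness. The clean route that avoids an explicit (and delicate) linear-independence computation on the maximal minors of $xI-A$ is the cardinality argument above, which converts existence for every target polynomial into uniqueness for each one. A secondary point to keep honest is that $E(k)$ must be independent of the chosen pair $(W,T)$ at each stage; this is automatic in the induction, since both the base case and the recursion yield the same value for all simple maps of the given dimension.
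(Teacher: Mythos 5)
Your proof is correct, and it rests on the same two ingredients as the paper's: repeated application of Lemma~\ref{lem:extension} to climb one dimension at a time, and the pigeonhole argument at the hyperplane level (by Theorem~\ref{thm:wimmer}, the characteristic-polynomial map from the $q^n$ operator extensions of a simple map on an $(n-1)$-dimensional subspace onto the $q^n$ monic degree-$n$ polynomials is surjective, hence bijective). In fact your base case is the paper's final step in substance. The difference is organizational: the paper fixes a \emph{single} partial flag $W=W_k\subset W_{k+1}\subset\cdots\subset W_{n-1}\subset\Fq^n$, counts the simple extensions of $T$ to $W_{n-1}$ along it as $\prod_{j=k+1}^{n-1}(q^n-q^j)$ by iterating Lemma~\ref{lem:extension}, and then pairs each such map with its unique extension having characteristic polynomial $f$ --- no induction, no cancellation. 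You instead run a downward induction whose step double counts pairs $(T',S)$ over \emph{all} $(k+1)$-dimensional subspaces $W'$ with $W\subset W'$, introducing the factor $(q^{n-k}-1)/(q-1)$ on both sides only to cancel it; this is extra bookkeeping that reproduces the same recursion $E(k)=(q^n-q^{k+1})E(k+1)$. What your version buys is transparency on two points the paper treats lightly: it states explicitly the fact used only tacitly in the paper's closing ``therefore,'' namely that an operator with irreducible characteristic polynomial is simple and hence restricts to a simple map on every proper subspace (this is exactly what guarantees that the count misses no extension of $T$), and it records that the answer is independent of the particular pair $(W,T)$, which the paper gets for free because its flag count never depends on them.
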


\begin{proof}
  Consider a fixed partial flag $W=W_k\subset W_{k+1}\subset \cdots \subset W_{n-1}\subset W_n=\Fq^n$ of subspaces with $\dim W_i=i$.  The number of extensions of $T$ to a simple map on $W_{n-1}$ equals $\prod_{j=k+1}^{n-1}(q^n-q^j)$ by repeated applications of Lemma \ref{lem:extension}. Given a simple map $T'$ defined on $W_{n-1}$ it can be extended in $q^n$ ways to all of $\Fq^n$. By Theorem \ref{thm:wimmer}, $T'$ can be extended to obtain any characteristic polynomial of degree $n$ (since $T'$ has all invariant factors equal to 1). Since there are $q^n$ monic polynomials of degree $n$, it follows that each such monic polynomial is the characteristic polynomial for a unique extension of $T'$. Therefore, the number of extensions of $T$ to all of $\Fq^n$ with characteristic polynomial $f$ equals $\prod_{j=k+1}^{n-1}(q^n-q^j)$.
\end{proof}
We are now ready to prove the main theorem.
\begin{theorem}\label{thm:main}
  If $T$ is a simple operator on $\Fq^n$ and $\mu$ is a partition of $n$, then
  \begin{align*}
    \sigma(\mu,T)=\frac{q^n-1}{q^{\mu_1}-1}q^{\sum_{j\geq 2}\mu_j^2-\mu_j}\prod_{i\geq 1}{\mu_i \brack \mu_{i+1}}_q.
  \end{align*}
\end{theorem}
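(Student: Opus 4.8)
The plan is to prove the formula by a double counting argument, using Corollary~\ref{cor:duality} to trade the $T$-profile of $W$ for the defect dimensions of the restriction $T|_W$, and then assembling the count from Theorem~\ref{thm:simplewithdefect} and Lemma~\ref{lem:numextensions}. Write $k=n-\mu_1$ and fix a monic irreducible polynomial $f\in\Fq[x]$ of degree $n$. All operators on $\Fq^n$ with characteristic polynomial $f$ are similar, so they form a single conjugacy class $\mathcal{T}_f$; since the centralizer of such an operator is $\Fq[T]^\times\cong\F_{q^n}^\times$, one has $|\mathcal{T}_f|=\gamma_q(n)/(q^n-1)$. Because $\sigma(\mu,T)$ depends only on the similarity class of $T$, it takes a common value on $\mathcal{T}_f$, which I will call $\sigma(\mu)$ and which is the quantity to be determined.

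Consider the set
\[
S=\{(W,T):T\in\mathcal{T}_f,\ W\subseteq\Fq^n,\ T|_W\text{ has defect dimensions }\mu\}.
\]
Every such $W$ has dimension $k$, because the first defect dimension of $T|_W$ equals $n-\dim W$ and must equal $\mu_1$. I will count $|S|$ in two ways. Summing over $T$ first, Corollary~\ref{cor:duality} gives exactly $\sigma(\mu)$ admissible subspaces $W$ for each $T\in\mathcal{T}_f$, so $|S|=|\mathcal{T}_f|\,\sigma(\mu)$. Summing over $W$ first, I choose one of the ${n\brack k}_q$ subspaces of dimension $k$; the key observation is that the restriction $T|_W$ of any simple operator to a proper subspace is itself a simple partial map (a nonzero invariant subspace of $T|_W$ would be a proper nonzero $T$-invariant subspace of $\Fq^n$, which cannot exist). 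Hence the admissible partial maps $T':W\to\Fq^n$ are precisely the simple maps with defect dimensions $\mu$, counted by Theorem~\ref{thm:simplewithdefect}, and for each such $T'$ the pairs $(W,T)\in S$ with $T|_W=T'$ are the extensions of $T'$ to an operator with characteristic polynomial $f$, counted by Lemma~\ref{lem:numextensions} as $\prod_{j=k+1}^{n-1}(q^n-q^j)$.

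Equating the two counts yields
\[
\frac{\gamma_q(n)}{q^n-1}\,\sigma(\mu)={n\brack k}_q\,\gamma_q(k)\,q^{\sum_{j\geq2}\mu_j^2}\Bigl(\prod_{i\geq1}{\mu_i\brack\mu_{i+1}}_q\Bigr)\prod_{j=k+1}^{n-1}(q^n-q^j).
\]
It then remains to simplify. Using ${n\brack k}_q\,\gamma_q(k)=\prod_{i=0}^{k-1}(q^n-q^i)$, the quotient of $\prod_{i=0}^{k-1}(q^n-q^i)\prod_{j=k+1}^{n-1}(q^n-q^j)$ by $\gamma_q(n)=\prod_{i=0}^{n-1}(q^n-q^i)$ collapses to $1/(q^n-q^k)$. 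Finally, since $\sum_{j\geq2}\mu_j=n-\mu_1=k$, the factor $q^{\sum_{j\geq2}\mu_j^2}$ combined with $1/(q^n-q^k)=q^{-k}/(q^{\mu_1}-1)$ produces the exponent $\sum_{j\geq2}(\mu_j^2-\mu_j)$, and the surviving $(q^n-1)$ gives the prefactor $(q^n-1)/(q^{\mu_1}-1)$, yielding the asserted formula.

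I expect the substantive steps to be the reduction via Corollary~\ref{cor:duality} together with the verification that $T|_W$ is simple (so that Theorem~\ref{thm:simplewithdefect} and Lemma~\ref{lem:numextensions} both apply), and the correct determination of $|\mathcal{T}_f|$; the remaining $q$-arithmetic is routine bookkeeping.
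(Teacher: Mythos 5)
Your proof is correct and follows essentially the same route as the paper's: a double count of pairs $(W,\tilde{T})$ over the conjugacy class of operators with characteristic polynomial $f$, combining Corollary~\ref{cor:duality}, Theorem~\ref{thm:simplewithdefect} and Lemma~\ref{lem:numextensions}, with the same conjugacy-class size $\gamma_q(n)/(q^n-1)$. The only differences are cosmetic: you spell out the (correct) observation that the restriction of a simple operator to a proper subspace is simple, which the paper uses implicitly, and your closing simplification via ${n\brack k}_q\,\gamma_q(k)=\prod_{i=0}^{k-1}(q^n-q^i)$ replaces the paper's equivalent orbit-stabilizer identity $\gamma_q(n)=\gamma_q(k)\gamma_q(n-k)\,q^{k(n-k)}{n\brack k}_q$.
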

\begin{proof}
  Let $k=n-\mu_1$ and let $f$ denote the characteristic polynomial of $T$. Double count pairs $(W,\tilde{T})$ such that $W$ is a $k$-dimensional subspace of $\Fq^n$ and $\tilde{T}$ is an operator on $\Fq^n$ with characteristic polynomial $f$ such that the restriction $\tilde{T}_W$ of $\tilde{T}$ to $W$ is simple with defect dimensions $\mu$. Since any two operators with characteristic polynomial $f$ are similar we have $\sigma(\mu,\tilde{T})=\sigma(\mu,T)$ for each such $\tilde{T}$. First choose an operator $\tilde{T}$ with characteristic polynomial $f$ in $\gamma_q(n)/(q^n-1)$ ways (since $\tilde{T}$ is cyclic, the algebra of operators commuting with $\tilde{T}$ is isomorphic to $\Fq[T]/(f)$ which has $q^n$ elements; subtract 1 for the (singular) zero operator). Once $\tilde{T}$ has been chosen, the number of choices for $W$ equals $\sigma(\mu,\tilde{T})$ by Corollary~\ref{cor:duality}. Therefore the total number of pairs $(W,\tilde{T})$ is given by  
  \begin{align*}
    \frac{\gamma_q(n)}{q^n-1}\; \sigma(\mu,T).
  \end{align*}
 On the other hand, first choose a $k$-dimensional subspace $W$ in ${n \brack k}_q$ ways. By Theorem \ref{thm:simplewithdefect}, the number of simple maps $T'$ defined on $W$ with defect dimensions $\mu$ is given by
  \begin{align*}
    q^{\sum_{j\geq 2}\mu_j^2}\;\gamma_q(k)\;\prod_{i\geq 1}{\mu_i\brack \mu_{i+1}}_q.
  \end{align*}
   By Lemma \ref{lem:numextensions}, the number of extensions of $T'$ to all of $\Fq^n$ with characteristic polynomial $f$ is given by $\prod_{j=k+1}^{n-1}(q^n-q^j)$. It follows that the total number of pairs $(W,\tilde{T})$ satisfying the stated property is given by
  \begin{align*}
  {n \brack k}_q  q^{\sum_{j\geq 2}\mu_j^2}\;\gamma_q(k)\;\prod_{i\geq 1}{\mu_i\brack \mu_{i+1}}_q\; \prod_{j=k+1}^{n-1}(q^n-q^j).
  \end{align*}
  By comparing the two counts, we obtain
  \begin{align*}
    \frac{\gamma_q(n)}{q^n-1}\; \sigma(\mu,T)={n \brack k}_q        q^{\sum_{j\geq 2}\mu_j^2}\;\gamma_q(k)\;\prod_{i\geq 1}{\mu_i\brack \mu_{i+1}}_q\; \prod_{j=k+1}^{n-1}(q^n-q^j).
  \end{align*}
The transitive action of the general linear group ${\rm GL}_n(\Fq)$ on subspaces of $\Fq^n$ of dimension $k$ yields, via the orbit-stabilizer theorem, the identity
  \begin{align*}
    \gamma_q(n)=\gamma_q(k)\gamma_q(n-k) q^{k(n-k)} {n \brack k}_q.
  \end{align*}
Substituting for $\gamma_q(n)$ from this identity, it follows that
  \begin{align*}
    \sigma(\mu,T)&=  (q^n-1) \frac{(q^n-q^{k+1})(q^n-q^{k+2})\cdots (q^n-q^{n-1})}{q^{k(n-k)}\gamma_q(n-k)} q^{\sum_{j\geq 2}\mu_j^2}\;\prod_{i\geq 1}{\mu_i\brack \mu_{i+1}}_q\\
   &= (q^n-1) \frac{(q^{n-k}-q)(q^{n-k}-q^{2})\cdots (q^{n-k}-q^{n-k-1})}{q^{k}(q^{n-k}-1)\cdots (q^{n-k}-q^{n-k-1})} q^{\sum_{j\geq 2}\mu_j^2}\;\prod_{i\geq 1}{\mu_i\brack \mu_{i+1}}_q\\
     &=  \frac{q^n-1}{q^{k}(q^{\mu_1}-1)}\; q^{\sum_{j\geq 2}\mu_j^2}\;\prod_{i\geq 1}{\mu_i\brack \mu_{i+1}}_q.
  \end{align*}
  The result now follows since $k=\sum_{j\geq 2}\mu_j$.
\end{proof}

\begin{corollary}\label{cor:split}
If $\F_{q^{md}}=\Fq(\alpha)$, then the number of $m$-dimensional $\alpha$-splitting subspaces of $\F_{q^{md}}$ is given by
  \begin{align*}
\frac{q^{md}-1}{q^{m}-1}q^{m(m-1)(d-1)}.
  \end{align*}
\end{corollary}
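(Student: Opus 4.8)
The plan is to realize the $\alpha$-splitting condition as an instance of the $T$-profile condition and then apply Theorem~\ref{thm:main}. First I would set $n=md$ and let $T$ be the $\Fq$-linear operator on $\F_{q^{md}}$ given by multiplication by $\alpha$; under the identification $\F_{q^{md}}\cong\Fq^{n}$ the operator $T^i$ is multiplication by $\alpha^i$, so that $\alpha^iW=T^iW$. Since $\F_{q^{md}}=\Fq(\alpha)$, the element $\alpha$ has degree $md$ over $\Fq$, whence the characteristic polynomial of $T$ (equal to the minimal polynomial of $\alpha$) is irreducible of degree $n$. Thus $T$ is a simple operator and Theorem~\ref{thm:main} is available.

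Next I would identify the profile of a splitting subspace. As $T$ is invertible, each $T^iW$ has the same dimension as $W$, so $\dim(W+TW+\cdots+T^{d-1}W)\leq dm=n$ with equality precisely when the sum is direct; since $n$ is the full dimension, this is exactly the $\alpha$-splitting condition on an $m$-dimensional $W$. When the top sum is direct, every partial sum $W+TW+\cdots+T^{j-1}W$ is a subsum of a direct sum and hence has dimension $jm$ for $1\leq j\leq d$, which says precisely that $W$ has $T$-profile $\mu=(m^d)$ (with all further parts zero). Conversely, a subspace with $T$-profile $(m^d)$ has $\dim W=\mu_1=m$ and $\dim(W+\cdots+T^{d-1}W)=dm=n$, so the sum of these $d$ subspaces of dimension $m$ is direct and $W$ is $\alpha$-splitting. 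Therefore the number of $m$-dimensional $\alpha$-splitting subspaces equals $\sigma\big((m^d),T\big)$.

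Finally I would substitute $\mu=(m^d)$ and $n=md$ into Theorem~\ref{thm:main}. The prefactor $\frac{q^n-1}{q^{\mu_1}-1}$ becomes $\frac{q^{md}-1}{q^m-1}$; the exponent $\sum_{j\geq2}(\mu_j^2-\mu_j)$ is a sum of the $d-1$ equal terms $m^2-m$, giving $q^{m(m-1)(d-1)}$; and each factor of $\prod_{i\geq1}{\mu_i\brack\mu_{i+1}}_q$ is either ${m\brack m}_q$ or ${m\brack 0}_q$, both equal to $1$, so the product is $1$. Multiplying these yields the claimed formula. I do not anticipate a real obstacle; the only delicate point is the equivalence between the splitting condition and the profile $(m^d)$, where one must verify that directness of the full sum forces all intermediate dimensions to be maximal rather than merely bounding them.
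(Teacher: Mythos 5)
Your proposal is correct and takes essentially the same route as the paper: identify $T_\alpha$ (multiplication by $\alpha$) as a simple operator because its characteristic polynomial is the irreducible minimal polynomial of $\alpha$, observe that $m$-dimensional $\alpha$-splitting subspaces are exactly those with $T_\alpha$-profile $(m^d)$, and substitute into Theorem~\ref{thm:main}. The only difference is that you spell out the equivalence between the splitting condition and the profile $(m^d)$, which the paper asserts without proof; your verification of that step is correct.
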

  
\begin{proof}
  Since $\alpha$ generates the extension $\F_{q^{md}}/\Fq$, the map $T_\alpha$ defined on $\F_{q^{md}}$ by $T_\alpha(x)=\alpha x$ is linear with characteristic polynomial equal to the minimal polynomial of $\alpha$ over $\Fq$. Therefore, the number of $m$-dimensional splitting subspaces in this case is given by $\sigma(\mu,T_\alpha)$, where $\mu=(m^d)$ denotes the rectangular partition with $d$ parts equal to $m$. By Theorem \ref{thm:main},  
\begin{align*}
&  \sigma((m^d),T_\alpha)=\frac{q^{md}-1}{q^m-1}q^{m(m-1)(d-1)}.\qedhere
\end{align*}
\end{proof}
\section{Acknowledgments}
This work was partially supported by an Indo-Russian project, grant number DST/INT/RUS/RSF/P41/2021 awarded by the Department of Science and Technology.
\printbibliography  
\end{document}